\documentclass{amsart}

\usepackage{amssymb}
\usepackage{color}
\usepackage{enumerate}
\usepackage{hyperref}

\theoremstyle{plain}
\newtheorem{theorem}{Theorem}[section]

\newtheorem{proposition}[theorem]{Proposition}
\newtheorem{corollary}[theorem]{Corollary}

\theoremstyle{definition}

\newtheorem{conjecture}[theorem]{Conjecture}

\theoremstyle{remark}
\newtheorem{remark}[theorem]{Remark}

\newtheorem{case}{Case}

\newcommand{\N}{\mathbb{N}}
\renewcommand{\O}{\mathcal{O}}
\renewcommand{\P}{\mathbb{P}}
\newcommand{\Q}{\mathbb{Q}}

\newcommand{\Z}{\mathbb{Z}}
\newcommand{\Qq}{\Q_\mathrm{quad}}

\renewcommand{\l}{\lambda}

\renewcommand{\t}{\tau}

\newcommand{\es}{\emptyset}
\newcommand{\ol}{\overline}

\newcommand{\gal}{\mathrm{Gal}}

\newcommand{\set}[1]{\left\{#1\right\}}
\newcommand{\tup}[1]{\left<#1\right>}

\begin{document}

\title{On quadratic periodic points of quadratic polynomials}

\author{Zhiming Wang}
\address[Zhiming Wang]{Department of Mathematics, Stanford University}
\email{zmwang@stanford.edu}

\author{Robin Zhang}
\address[Robin Zhang]{Department of Mathematics, Stanford University}
\email{robinz16@stanford.edu}

\date{\today}

\maketitle

\begin{abstract}
  Bounding the number of preperiodic points of quadratic polynomials
  with rational coefficients is one case of the Uniform Boundedness
  Conjecture in arithmetic dynamics. Here, we provide a general
  framework that may reduce finding periodic points of such
  polynomials over Galois extensions of $\Q$ to finding periodic
  points over the rationals. Furthermore, we present evidence that
	there are no such polynomials (up to linear conjugation)
  with periodic points of exact period 5 in quadratic fields by
  searching for points on an algebraic curve that classifies quadratic
	periodic points of exact period 5 and suggesting the application of the method
  of Chabauty and Coleman for further	progress.
\end{abstract}

\section{Introduction}
\label{sec:intro}

The principal goal of the study of a discrete dynamical system is to
classify points of a set according to their orbits under a
self-map. Finite orbits, i.e. orbits of period points and preperiodic
points\footnote{%
  For a discrete dynamical system consisting of a set $S$ and a
  self-map $\phi: S \to S$, a point $\alpha \in S$ is called a
  \emph{preperiodic point} if $\phi^{m+n}(\alpha) = \phi^m(\alpha)$
  for some $m \ge 0$ and $n \ge 1$.}
, are of particular interest for obvious reasons. In the field of
arithmetic dynamics, it is natural to further impose number theoretic
conditions on the periodic or preperiodic points, e.g., that the
points be rational, or be in a certain number field. One profound
problem in this field---the Uniform Boundedness Conjecture---follows
this line of thought and generalizes Merel's Theorem on torsion points
of elliptic curves. The Uniform Boundedness Conjecture in arithmetic
dynamics posits that the number of preperiodic points of a rational
morphism over a number field is not only finite (Northcott, 1950
\cite{MR0034607}), but can be bounded by a number depending only on a
few general parameters. The precise statement is as follows.

\begin{conjecture}[Morton-Silverman, 1994~\cite{MR1264933}]
  Fix integers $d \ge 2$, $n \ge 1$, and $D \ge 1$. There is a
  constant $C(d, n, D)$ such that for all number fields $K/\Q$ of
  degree at most $D$ and all morphisms $\phi: \P^n \to \P^n$ of degree
  $d$ defined over $K$, the number of preperiodic points of $\phi$
  over $\P^n(K)$ is bounded above by $C(d, n, D)$.
\end{conjecture}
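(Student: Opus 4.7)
The Morton--Silverman conjecture is one of the central open problems in arithmetic dynamics, and no proof is known even in the simplest nontrivial case ($d = 2$, $n = 1$, $D = 1$, i.e.\ quadratic polynomials over $\Q$). Any serious proposal must therefore be modular: reduce the general conjecture to a parameter-by-parameter collection of finiteness statements, and attack each by combining moduli-theoretic rigidity with Diophantine finiteness on curves (or higher-dimensional varieties). My plan would be to mimic the strategy that proved Mazur's and Merel's torsion theorems, substituting dynamical moduli spaces for modular curves.

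The first step is reduction to polynomial dynamics in one variable by the standard arguments showing that growth of $\#\preper(\phi, K)$ in each fixed $(d, n)$ reduces, via specialization and degree-$1$ slicing, to the case $n = 1$; and then by conjugating to normal form, to polynomials $f_c(z) = z^d + c$ (or the $(d-1)$-parameter Henri--Milnor normal forms). Second, for each pair $(d, D)$ one constructs the \emph{dynatomic moduli space} $\mathrm{Dyn}_1(N)$ parametrizing pairs $(f, \alpha)$ with $f(z) = z^d + c$ and $\alpha$ a point of exact period $N$; these are affine curves whose compactifications $X_1(N)$ have been computed for small $N$ by Morton, Poonen, Stoll, and others. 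A uniform bound would follow if one could show that $X_1(N)(K) = X_1(N)(\Q)$ is empty (or consists only of cusps and degenerate orbits) for all $N$ beyond a threshold $N_0(D)$, for every $K$ of degree $\le D$. Third, one applies Diophantine tools to $X_1(N)$: Faltings' theorem guarantees finiteness of $K$-points once the genus exceeds $1$, but only effective Chabauty--Coleman and Mordell--Weil sieve methods (as invoked in the paper for period $5$ over quadratic fields) can rule them out completely.

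Within the specific regime of the present paper --- quadratic $f$, $D = 2$, small $N$ --- the plan specializes to the following. Using the framework alluded to in the abstract, one reduces the search for period-$N$ orbits over a quadratic field $K$ to the search for $\Q$-rational points on a suitable quotient or twist of $X_1(N)$, by exploiting the $\gal(K/\Q)$-action on orbits. One then computes the Jacobian $J$ of this curve, verifies the Chabauty condition $\mathrm{rank}\,J(\Q) < \mathrm{genus}(X_1(N))$, and runs Coleman integration on each residue disk to eliminate putative quadratic points. For $N = 5$ this has been carried out in part, which is what the paper claims as evidence.

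The main obstacle is the uniform step --- going from ``small $N$'' to ``all $N > N_0$''. For each finite $N$ one has a computable curve, but the genus, Mordell--Weil rank, and arithmetic of $X_1(N)$ grow with $N$ in a way that is currently understood only asymptotically, and the Chabauty condition eventually fails. Breaking this barrier would require a substitute for Merel's use of Eisenstein quotients and formal immersions in the modular setting; no analogue of that structure on dynatomic curves is presently known, and producing one is, in my view, where the real work lies.
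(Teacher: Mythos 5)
The statement you were given is labelled a \emph{conjecture} in the paper, attributed to Morton and Silverman; the paper offers no proof of it and none exists in the literature. It is cited purely as motivation for the paper's work on the very special case $(d,n,D)=(2,1,2)$. You correctly recognize this and refrain from inventing a proof, and your sketch of a research program --- pass to normal forms, construct dynatomic curves, apply Faltings and Chabauty--Coleman, and search for a Merel-type uniform argument --- is a fair description of both the state of the art and the strategy the paper itself pursues in its special case (the curves $C_1(N)$, $C_0(N)$, the auxiliary curve $C_P$, and the suggested Chabauty--Coleman computation for $N=5$).

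Two cautionary remarks on the outline itself. First, your opening reduction from general $n \geq 1$ to $n = 1$ ``via specialization and degree-$1$ slicing'' is not a known reduction: the conjecture for $\P^n$ with $n > 1$ is expected to be strictly harder, and there is no argument that deduces it from the one-dimensional case; the paper accordingly fixes $n=1$ from the outset rather than reducing to it. Second, even in dimension one the passage from ``$X_1(N)$ has finitely many $K$-points for each fixed $N$'' (Faltings) to ``the count is uniformly bounded over all $N$ and all $K$ of bounded degree'' is precisely the open content of the conjecture, as you note; the paper does not claim any progress on that uniform step, and neither should the proposal. With those caveats, your assessment --- that the real obstruction is the absence of a dynatomic analogue of the Eisenstein-quotient/formal-immersion machinery --- is accurate and matches the implicit framing of the paper.
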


Little progress has been made on this conjecture. In fact, even the
simplest case $(d, n, D) = (2, 1, 1)$, i.e., the problem of bounding
the number of rational preperiodic points of quadratic rational
morphisms, still awaits treatment. In this paper, we will loosely
stick to the case $(d, n, D) = (2, 1, 2)$, and further specialize to
quadratic polynomials with rational coefficients. Note that linear
conjugation on $\phi$ does not affect the size of orbits, so all such
polynomials can be put into the standard form $\phi(z) = z^2 + c$,
where $c$ is rational.

After the completion of this project it was brought to our attention
that Hutz and Ingram \cite{MR3065461} investigated the $(d, n, D) =
(2, 1, 2)$ case and provide further strong computational evidence for
\ref{cj:n=5-zero} from the different perspective of dealing
with all N for various rational values of c as opposed to our approach
of various values of N for all rational c. Doyle, Faber, and
Krumm \cite{1309.6401} also produced related results in this case from
considerations of the finite directed graphs of $K$-rational preperiodic
points for quadratic number fields $K$. Also, Morton and Patel \cite{MortonPatel}
provide different considerations of the general connections between Galois theory
and periodic points which are related to our results in Section~\ref{sec:galois}.

Before presenting our main results, we introduce some notations and
definitions.

\subsection{Notations and definitions}
\label{subsec:def}

Let $\phi(z) = z^2 + c$ where $c$ is rational (from this point onward,
all polynomial coefficients are rational unless otherwise
specified). Let $\phi^N$ denote the $N$-th iteration of $\phi$. If a
number $z$ in some number field satisfies $\phi^N(z) = z$, then $z$ is
called a \emph{periodic point} of $\phi$ of \emph{period $N$}, and its
orbit
\[
(z,\, \phi(z),\, \phi^2(z),\, \dots,\, \phi^{N-1}(z))
\]
is called an \emph{$N$-cycle} of $\phi$. The \emph{trace} of the
$N$-cycle is defined to be the sum of all its elements. Furthermore,
if $\phi(z),\, \dots, \phi^{N-1}(z)$ are all distinct from $z$, then
the orbit is called an \emph{exact $N$-cycle}, and $z$ is called a
periodic point of \emph{exact period $N$}. For convenience, we will
frequently refer to periodic points of exact periodic $N$ as
\emph{$N$-periodic points}.

In this paper, instead of considering periodic points in any single
quadratic number field, we will consider the total number of periodic
points in all quadratic number fields. For this purpose, we introduce
the notation $\Qq$ for the union of all quadratic number
fields. Formally,
\[
\Qq := \set{\alpha \in \ol{\Q}: \text{$p(\alpha) = 0$ for some $p(x)
\in \Q[x]$ of degree 2}}.
\]
With this notation, we can now state our main results.

\subsection{Main results}
\label{subsec:results}

We would like to classify all $N$-periodic points of quadratic
polynomials $\phi_c(z) = z^2 + c$ in $\Qq$. Note however that for any
given $c$, all $N$-periodic points of $\phi_c$ must be roots of the
polynomial $\phi_c^N(z) - z = 0$ in $z$, so it suffices to classify
all rational values of $c$ such that $\phi_c$ has an $N$-periodic
point in $\Qq$. This will be the subject of this paper.

The following theorem and the conjecture derived from it reveal
certain connections between different elements of an $N$-cycle, and in
some cases impose strong restrictions on the trace of the cycle that
help pin down the $N$-periodic points. Galois action will permute
periodic points because they are roots of a dynatomic polynomial with
rational coefficients, but it is not obvious whether points in the same
cycle will be Galois conjugates.

\begin{theorem}
  \label{thm:galois}
  Let $N \in \N^*$, $c \in \Q$, $\phi_c(z) = z^2 + c$, and $K$ be a
  Galois extension of $\Q$ with degree $d = [K : \Q]$. Let
  $(z_0, \dots, z_{N-1})$ be an exact $N$-cycle of $\phi_c$, where
  $z_j \in K$ and $\phi_c(z_j) = z_{j+1}$ for all $j \in \Z/N\Z$. Let
  $g = \gcd(N, d)$. Then exactly one of the following holds:
  \begin{enumerate}[(i)]
  \item $z_{m \cdot \frac{N}{g}} = \t(z_0)$ for some $m \in \Z/g\Z$
    and some nontrivial $\t \in \gal(K/\Q)$;

  \item
    $\set{z_0, \dots, z_{N-1}} \cap \set{\t(z_0), \dots, \t(z_{N-1})}
    = \es$ for all nontrivial $\t \in \gal(K/\Q)$.
  \end{enumerate}
\end{theorem}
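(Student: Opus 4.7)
The plan is to exploit the fact that $\phi_c$ has rational coefficients, so that every $\tau \in \gal(K/\Q)$ commutes with $\phi_c$ as maps $K \to K$. Given the dichotomy in the statement, the natural approach is to assume (ii) fails and deduce (i); mutual exclusivity is then immediate, since in case (i) the element $\tau(z_0) = z_{m \cdot N/g}$ lies in both sets whose intersection (ii) declares empty.

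So suppose (ii) fails: there exist a nontrivial $\tau \in \gal(K/\Q)$ and indices $i, j$ with $\tau(z_i) = z_j$. First I would show that $\tau$ permutes the whole cycle. Since $\tau \circ \phi_c = \phi_c \circ \tau$, one has $\tau(z_{i+\ell}) = \tau(\phi_c^\ell(z_i)) = \phi_c^\ell(\tau(z_i)) = \phi_c^\ell(z_j) = z_{j+\ell}$ for every $\ell$, with indices taken mod $N$. Thus $\tau$ restricts to a bijection of $\set{z_0, \dots, z_{N-1}}$ and, more precisely, acts on the cycle as the shift $z_i \mapsto z_{i+k}$ where $k := j - i \pmod{N}$. (This is just the calculation of the centralizer of an $N$-cycle in $S_N$, phrased dynamically: a permutation of the orbit commuting with $\phi_c$ is determined by where it sends $z_0$, so the centralizer is cyclic of order $N$ generated by $\phi_c$ itself.)

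The next step is to pin down $k$ using the order of $\tau$. Because $K/\Q$ is Galois of degree $d$, the subgroup $\tup{\tau} \le \gal(K/\Q)$ has order dividing $d$, so $\tau^d$ fixes $z_0$. Iterating the shift description, $\tau^d(z_0) = z_{kd \bmod N}$, so $kd \equiv 0 \pmod{N}$, i.e.\ $(N/g) \mid k(d/g)$. Since $\gcd(N/g, d/g) = 1$ by definition of $g$, this forces $(N/g) \mid k$, so $k = m \cdot (N/g)$ for some $m \in \Z/g\Z$. Taking $i = 0$ in the description $\tau(z_i) = z_{i+k}$ yields $\tau(z_0) = z_{m \cdot N/g}$, which is exactly conclusion (i).

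I do not anticipate a hard step: the only place requiring care is the claim that $\tau$ commuting with $\phi_c$ and hitting one cycle element forces $\tau$ to permute the entire cycle as a power of the shift. This is really a statement about the centralizer of an $N$-cycle, and the dynamical proof above makes it transparent. One subtlety worth flagging in the write-up is that $m = 0$ is allowed in (i), corresponding to the possibility that $\tau$ fixes $z_0$ (and hence the whole cycle pointwise); the hypothesis is only that $\tau$ is nontrivial on $K$, not on the cycle.
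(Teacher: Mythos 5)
Your proof is correct and follows essentially the same route as the paper's: deduce from a single coincidence $\tau(z_i)=z_j$ that $\tau$ acts on the cycle as the shift by $k$ (since $\tau$ and $\phi_c$ commute), use $\tau^d=\mathrm{id}$ to force $N\mid kd$, and conclude $k$ is a multiple of $N/g$. The paper phrases the reduction as setting $j=0$ and noting $\tau\equiv\phi_c^k$ on the cycle, while you phrase it as a centralizer calculation and spell out the coprimality step $\gcd(N/g,d/g)=1$ more explicitly, but the argument is the same.
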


We have good reasons to believe that the second case never occurs,
hence we propose the following conjecture.

\begin{conjecture}
  \label{cj:galois}
  Let $N \in \N^*$, $c \in \Q$, $\phi_c(z) = z^2 + c$, and $K$ be a
  Galois extension of $\Q$ with degree $d = [K : \Q]$. Let
  $(z_0, \dots, z_{N-1})$ be an exact $N$-cycle of $\phi_c$, where
  $z_j \in K$ and $\phi_c(z_j) = z_{j+1}$ for all $j \in \Z/N\Z$. Let
  $g = \gcd(N, d)$. Then there is some $m \in \Z/g\Z$ and some
  nontrivial $\t \in \gal(K/\Q)$ such that
  $z_{m \cdot \frac{N}{g}} = \t(z_0)$.
\end{conjecture}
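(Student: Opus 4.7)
The plan is to rule out case (ii) of Theorem~\ref{thm:galois}, which by the dichotomy would force case (i) and thereby prove the conjecture. In case (ii), the orbit of $C = (z_0, \dots, z_{N-1})$ under $\gal(K/\Q)$ consists of $d$ pairwise disjoint exact $N$-cycles of $\phi_c$ in $K$, and the stabilizer in $\gal(K/\Q)$ of each $z_j$ is trivial, so each $z_j$ is a primitive element of $K$ whose minimal polynomial over $\Q$ is an irreducible factor of degree $d$ of the $N$th dynatomic polynomial $\Phi_N^*(z, c) \in \Q[z]$. Since no $z_i$ is a Galois conjugate of $z_j$ for $i \ne j$, these $N$ minimal polynomials are distinct irreducible factors of $\Phi_N^*(z, c)$, and they are permuted cyclically by the action of $\phi_c$.

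First I would exploit this rigid structure by combining the cyclic $\phi_c$-action on the $N$ irreducible factors with the action of $\gal(K/\Q)$ on their roots; this gives a semidirect-product structure on the $dN$ periodic points, and I would try to show it forces some element of $\gal(K/\Q)$ to coincide with a power of $\phi_c$ on $C$, which is exactly case (i). In parallel, I would work with the trace $T_0 = z_0 + \cdots + z_{N-1}$, which in case (ii) must also generate $K$ over $\Q$, and examine the irreducible polynomial of degree $d$ satisfied by the conjugate traces $T_0, \dots, T_{d-1}$. Restrictions arising from Morton-style trace formulas, together with reductions of $\phi_c$ modulo primes of good reduction (where Frobenius must permute the $\ol{\F_p}$-cycles of $\ol{\phi_c}$ compatibly with both $\phi_c$ and the group structure of its cyclic image), should produce obstructions that rule out case (ii) for many pairs $(N, d)$.

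The main obstacle is that, in full generality, the permutation representation of $\gal(K/\Q)$ on the roots of $\Phi_N^*(z, c)$ need not decompose in any especially constrained way for arbitrary $c \in \Q$, while the conjecture asserts a uniform behavior that the tools above are unlikely to capture all at once. I therefore expect that any unconditional proof will require either a genuinely new arithmetic input---perhaps a global invariant of $(c, N)$ that detects setwise stabilization of cycles---or a case-by-case computational verification for each small pair $(N, d)$, mirroring the Chabauty--Coleman approach the paper applies to $N = 5$. Pinning down such a uniform obstruction is where I expect the main difficulty to lie.
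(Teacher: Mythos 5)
The statement you were asked to prove is labelled a \emph{conjecture} in the paper, and the paper does not prove it: the authors only establish the dichotomy of Theorem~\ref{thm:galois} and then cite Panraksa's $N=4$, $d=2$ result, the Flynn--Poonen--Schaefer $5$-cycles, the Stoll $6$-cycles, and computation as evidence that case~(ii) never occurs. Your reduction of the conjecture to ``rule out case~(ii)'' is therefore exactly the right framing, and your structural observations in case~(ii) are essentially correct: the $d$ Galois translates of the cycle are pairwise disjoint exact $N$-cycles, each $z_j$ has trivial stabilizer and hence generates $K$, the $N$ minimal polynomials are pairwise distinct degree-$d$ irreducible factors of $\Phi_N(z,c)$, and $\phi_c$ permutes them cyclically. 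One small overstatement: the assertion that the trace $T_0 = z_0 + \cdots + z_{N-1}$ must also generate $K$ in case~(ii) does not follow; two disjoint Galois-conjugate $N$-cycles can have equal traces (they would then map to the same point of $C_0(N)$), so the stabilizer of $T_0$ need not be trivial, and any argument leaning on $T_0$ being a primitive element needs a separate justification.

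There is a more serious obstruction that you (and apparently the authors) should confront before investing in any of the proposed machinery: the conjecture as literally stated is false for $N=1$. Take $c=-1$ and $K = \Q(\sqrt5)$. The two fixed points of $\phi_{-1}(z) = z^2 - 1$ are $z_0 = (1+\sqrt5)/2$ and $z_0' = (1-\sqrt5)/2 = \ol{z_0}$; each is its own exact $1$-cycle, $g = \gcd(1,2)=1$, and the conjecture would force $z_0 = \t(z_0)$ for the nontrivial $\t$, i.e.\ $z_0 \in \Q$, which is false. This is precisely case~(ii) of Theorem~\ref{thm:galois}: the Galois conjugate of the cycle $\{z_0\}$ is the disjoint cycle $\{z_0'\}$. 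The structural reason is that $\deg_z \Phi_N = N$ only for $N = 2$; for $N=1$ the dynatomic polynomial has degree $2$ but splits into two distinct $1$-cycles, so the Galois orbit of a root can be spread across cycles. Any proof program such as the one you sketch must therefore either restrict to $N \ge 2$ (and use that restriction somewhere essential) or, more honestly, first reformulate the conjecture to exclude this degenerate case. As it stands, the uniform Galois/$\phi_c$ semidirect-product, trace, and Frobenius arguments you describe do not see the difference between $N=1$ and $N\ge 2$, so they cannot succeed without an additional input; identifying exactly which feature of $N\ge 2$ (e.g.\ the count $\deg_z \Phi_N / N$ of cycles and how Galois can act on them) blocks case~(ii) is where the real content of a proof would have to lie.
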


In the $d = 2$ case, i.e., the quadratic case that we are mostly
concerned about in this paper, Conjecture~\ref{cj:galois} reduces
finding quadratic $N$-periodic points to finding rational points on
certain algebraic curves, as we will see in
Section~\ref{sec:implications}. In particular, in the period 5 case,
it implies the following conjecture.

\begin{conjecture}
  \label{cj:n=5-zero}
  There are no rational values $c$ such that $\phi_c(z) = z^2 + c$ has
  a 5-periodic point in $\Qq$.
\end{conjecture}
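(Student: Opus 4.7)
The plan is to derive Conjecture~\ref{cj:n=5-zero} from Conjecture~\ref{cj:galois} combined with Morton's theorem that $\phi_c(z) = z^2+c$ admits no rational periodic point of exact period $5$. Suppose for contradiction that some rational $c$ yields a $5$-periodic point $z_0 \in \Qq$, and set $K = \Q(z_0)$, so that $d := [K:\Q] \in \{1,2\}$ and $K/\Q$ is Galois in either case. If $d=1$, then the whole $5$-cycle is rational, contradicting Morton, so I may assume $d=2$.

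Now apply Conjecture~\ref{cj:galois} to the exact $5$-cycle $(z_0,\dots,z_4)$ in $K$. Here $g = \gcd(N,d) = \gcd(5,2) = 1$, so $\Z/g\Z$ is trivial and the only admissible index is $m=0$. The conjecture therefore produces a nontrivial $\tau \in \gal(K/\Q)$ with $z_0 = \tau(z_0)$. Since $|\gal(K/\Q)| = 2$, the unique nontrivial $\tau$ fixes precisely $\Q$, so $z_0 \in \Q$; but then the entire cycle is rational, again contradicting Morton. This completes the conditional proof.

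To remove the dependence on Conjecture~\ref{cj:galois}, one would instead study the exact period-$5$ dynatomic curve $X_1(5)$ cut out by $\Phi_5^*(c,z) = 0$ and try to determine its quadratic points directly. Rational points are handled by Morton, so the task is to rule out points of degree exactly $2$; the natural tool is Siksek-style symmetric Chabauty applied to the symmetric square $X_1(5)^{(2)}$, ideally after passing to a quotient by the natural cyclic order-$5$ automorphism that rotates the cycle $(z_0,\dots,z_4)$. The principal obstacle I expect is the Chabauty rank hypothesis: $X_1(5)$ has large genus, so one has to compute (or bound) the Mordell--Weil rank of the Jacobian of the relevant quotient and show it is strictly smaller than the genus. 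Once such a rank bound is secured, a $p$-adic Coleman-integration calculation at small primes of good reduction, of the kind implemented in Magma, should finish the remaining finite check over residue classes.
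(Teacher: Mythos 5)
The statement you are proving is labeled a \emph{conjecture} in the paper, and indeed neither you nor the authors supply an unconditional proof; so your submission must be read as a conditional argument plus a plan of attack, which is how the paper itself treats it.

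Your conditional derivation is correct and is essentially the paper's Corollary~\ref{cor:n=5-0galois}: for $N=5$, $d=2$ one has $g=\gcd(5,2)=1$, so Conjecture~\ref{cj:galois} forces $z_0=\t(z_0)$ for the nontrivial $\t\in\gal(K/\Q)$, hence $z_0\in\Q$, collapsing the cycle into $\Q$ and reducing to the nonexistence of rational $5$-cycles. One substantive correction: the nonexistence of rational exact $5$-cycles for $z^2+c$ is due to Flynn, Poonen, and Schaefer \cite{MR1480542}, not Morton. Morton's result is the $N=4$ case. You should cite \cite{MR1480542} here; otherwise the conditional argument is exactly the paper's.

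For the unconditional route, your plan (symmetric Chabauty on $X_1(5)^{(2)}$, ideally after quotienting by the order-$5$ cycle automorphism) is a reasonable and genuinely different concretization than what the paper actually does. The paper instead starts from the explicit Flynn--Poonen--Schaefer hyperelliptic model of $C_0(5)$ (i.e., precisely the quotient you propose passing to), imposes that $x$ satisfy a rational quadratic $x^2+ax+b=0$, eliminates via long division and resultants, and lands on an explicit plane curve $C_P$ in the $(a,b)$-plane whose normalization has genus $11$; rational points on $C_P$ then parametrize the quadratic $5$-cycles. A point search finds only five affine rational points, none giving a $5$-cycle. Crucially, the paper also observes that a direct Chabauty--Coleman bound on $C_P$ is estimated to be at least $50$, far exceeding the $11$ known (projective, with multiplicity) points, so the ``finish with Coleman integration'' step in your plan is exactly where the paper's attempt stalls as well; some refinement in the spirit of \cite{MR1480542} would be required. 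Your worry about the Chabauty rank hypothesis on a high-genus quotient is therefore well founded, and the gap in your plan is the same gap that leaves this a conjecture in the paper.
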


Conjecture~\ref{cj:n=5-zero} is also supported by the following
well-known consequence of Faltings's Theorem since due to the high
genus of corresponding algebraic curves $C_1(5)$ and $C_0(5)$ as
described by Flynn, Poonen, and Schaefer \cite{MR1480542}. Furthermore,
recent work by Hutz and Ingram \cite{MR3065461} and Doyle, Faber,
and Krumm \cite{1309.6401} also provide discussions of
Conjecture~\ref{cj:n=5-zero}.

\begin{proposition}
  \label{prop:n=5-finite}
  There are finitely many rational values $c$ such that
  $\phi_c(z) = z^2 + c$ has a 5-periodic point in $\Qq$.
\end{proposition}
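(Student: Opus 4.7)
The plan is to reduce Proposition~\ref{prop:n=5-finite} to a finiteness statement for $\Q$-rational points on an algebraic curve of genus at least $2$, to which Faltings's Theorem applies. By Flynn, Poonen, and Schaefer \cite{MR1480542}, the dynatomic curve $C_1(5)$ parametrizing pairs $(z, c)$ with $z$ a $5$-periodic point of $\phi_c$ has genus $14$, and the quotient $C_0(5)$ of $C_1(5)$ by the cyclic action $(z, c) \mapsto (\phi_c(z), c)$ has genus $2$.

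I would then introduce the relative symmetric square
\[
W := \mathrm{Sym}^2_{\P^1_c}(C_1(5)),
\]
the curve of unordered pairs $\{P_1, P_2\} \subset C_1(5)$ with $\pi_c(P_1) = \pi_c(P_2)$, where $\pi_c \colon C_1(5) \to \P^1_c$ is the degree-$30$ projection onto the $c$-line. Every $c \in \Q$ for which $\phi_c$ admits a $5$-periodic point $z \in \Qq$ contributes a $\Q$-rational point of $W$: if $z \in \Q$, take the diagonal point $\{(z, c), (z, c)\}$; otherwise, take the Galois-conjugate pair $\{(z, c), (\s z, c)\}$ with $\s$ the nontrivial element of $\gal(\Q(z)/\Q)$. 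It therefore suffices to show that $W(\Q)$ is finite.

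The diagonal component of $W$ is isomorphic to $C_1(5)$, hence has genus $14$, and Faltings handles it. For any other irreducible component $X$ of $W$, I would lift to its ordered preimage in $C_1(5) \times_{\P^1_c} C_1(5)$ and project to $C_1(5)$ via the first factor; this is a finite map of some degree $d \le 29$, since off-diagonal fibers have at most $30 - 1 = 29$ points. Riemann--Hurwitz then gives
\[
2 g(X) - 2 \ge d \cdot (2 \cdot 14 - 2) = 26 d \ge 26,
\]
so $g(X) \ge 14$ unconditionally. Faltings applies componentwise, hence $W(\Q)$ is finite and projects to a finite set of $c$-values in $\P^1_c(\Q)$, as desired.

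The main technical points are the standard bookkeeping in passing between ordered and unordered models to make Riemann--Hurwitz rigorous, and controlling the possible reducibility of $W$ off the diagonal. Since the genus bound $g(X) \ge 14$ holds for every component, these concerns only strengthen the argument; one could equally well run it with $C_0(5)$ in place of $C_1(5)$, using the degree-$6$ projection and $g(C_0(5)) = 2$ to obtain the more modest but still sufficient bound of $g \ge 4$ per off-diagonal component, which is the sense in which the high genus of both $C_1(5)$ and $C_0(5)$ enters the proof.
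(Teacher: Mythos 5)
Your overall strategy---package the quadratic $5$-periodic data as rational points on a relative symmetric square over $\P^1_c$ and then invoke Faltings on each component---is in the same spirit as the paper's Section~5 argument, which is morally a $\mathrm{Sym}^2$ construction done in explicit coordinates $(a,b) = (-(x+\bar x),\, x\bar x)$ on the FPS model $y^2=f(x)$ of $C_0(5)$. But the paper gets the needed genus by \emph{computing} it (the curve $C_P$ cut out by the resultant $P(a,b)$ has genus $11$, checked in Sage), whereas you try to \emph{bound} it by Riemann--Hurwitz, and that step has a real gap.

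The issue is that your Riemann--Hurwitz inequality controls the genus of the \emph{ordered} lift $X' \subset C_1(5)\times_{\P^1_c}C_1(5)$, not of the unordered component $X\subset W$ that actually carries the $\Q$-points you care about. You project $X'\to C_1(5)$ of degree $d$ and correctly deduce $2g(X')-2\ge d\,(2\cdot 14-2)$, so $g(X')\ge 14$. But then you write $g(X)$ in place of $g(X')$. The quotient map $X'\to X$ by the swap involution is a degree-$2$ cover ramified exactly at the points of $X'$ lying on the diagonal of $C_1(5)\times C_1(5)$ (these are genuine: the off-diagonal components can and do meet the diagonal over the branch locus of $\pi_c$, which is sizeable since $C_1(5)\to\P^1_c$ has $86$ ramification points). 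Riemann--Hurwitz for $X'\to X$ gives $2g(X')-2 = 2(2g(X)-2)+R$, i.e.\ $g(X) = \tfrac12\bigl(g(X')+1-\tfrac{R}{2}\bigr)$, and nothing in your argument bounds $R$. A swap-stable component can have $g(X)$ much smaller than $g(X')$---the model case is the anti-diagonal of a hyperelliptic curve, where $X'$ has genus $2$ and $X\cong\P^1$. Note that the swap-stable components are precisely the ones that can carry rational points coming from genuinely quadratic $z$, so this is exactly the case you cannot afford to lose. (If $X'$ is \emph{not} swap-stable then $X'\cong X$ and your bound does carry over, but then the $\Q$-point on $X$ lifts to a $\Q$-point on $X'$ or its swap, which forces $z\in\Q$.) To close the gap you would need either a bound on the diagonal intersection number $R$, or---as the paper does---to simply exhibit the relevant quotient curve and compute its genus. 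Your parenthetical fallback via $C_0(5)$ only worsens matters: there $g(X')\ge d\cdot 1 + 1\ge 2$ (not $\ge 4$ as stated, since $d$ can be $1$), leaving no slack at all once $R>0$.

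Separately, be a little careful asserting that a rational $c$ with a quadratic $5$-periodic $z$ always yields a $\Q$-point of $W$ on an off-diagonal component: you must first pass from $z$ to the corresponding point of $C_1(5)$ (or better, of $C_0(5)$ via the cycle trace, as the paper does), and then the rational pair is $\{P,\s P\}$ for that point $P$; the off-diagonal claim requires $\s P\ne P$, i.e.\ $P$ itself quadratic, which one should justify before splitting into cases.
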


In Section~\ref{sec:n=5}, we study a genus 11 curve with rational points
corresponding to rational $c$ and corresponding 5-cycles in $\Qq$. We present
computational evidence towards Conjecture~\ref{cj:n=5-zero} in a search
for rational points on a corresponding algebraic curve. Perhaps through a
clever application of Coleman and Chabauty's methods, one may prove that
the points that we have found are all such points and thus obtain a
proof of Conjecture~\ref{cj:n=5-zero}.

Throughout this paper, we will perform the necessary calculations in
Mathematica (version 9.0.1.0) or Sage (version 6.2). All computational
programs can be found in our source code repository \cite{src}. The
repository also contains additional programs that provide
computational evidence for our conjectures, including one C++ program
using the FLINT library \cite{Hart2010}.

\section{Periodic points in Galois number fields}
\label{sec:galois}

The main objects of study in this paper are the periodic points of
quadratic polynomials in quadratic extensions of $\Q$. It is easy to
see that these quadratic extensions are automatically Galois over
$\Q$.  The property of being Galois alone leads to an interesting
result for Galois extensions of general degrees, following the
observation that polynomial maps (with rational coefficients) commute
with Galois conjugations.

\begin{theorem}[Restatement of Theorem~\ref{thm:galois}]
  \label{thm:galois-re}
  Let $N \in \N^*$, $c \in \Q$, $\phi_c(z) = z^2 + c$, and $K$ be a
  Galois extension of $\Q$ with degree $d = [K : \Q]$. Let
  $(z_0, \dots, z_{N-1})$ be an exact $N$-cycle of $\phi_c$, where
  $z_j \in K$ and $\phi_c(z_j) = z_{j+1}$ for all $j \in \Z/N\Z$. Let
  $g = \gcd(N, d)$. Then exactly one of the following holds:
  \begin{enumerate}[(i)]
  \item $z_{m \cdot \frac{N}{g}} = \t(z_0)$ for some $m \in \Z/g\Z$
    and some nontrivial $\t \in \gal(K/\Q)$;

  \item
    $\set{z_0, \dots, z_{N-1}} \cap \set{\t(z_0), \dots, \t(z_{N-1})}
    = \es$ for all nontrivial $\t \in \gal(K/\Q)$.
  \end{enumerate}
\end{theorem}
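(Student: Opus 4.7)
The plan is to exploit the fact that $\phi_c$ has rational coefficients, so every $\t \in \gal(K/\Q)$ commutes with $\phi_c$. Consequently, applying $\t$ entrywise to the exact $N$-cycle $(z_0, \dots, z_{N-1})$ yields another exact $N$-cycle $(\t(z_0), \dots, \t(z_{N-1}))$. The dichotomy between (i) and (ii) should arise from whether, for some nontrivial $\t$, this image cycle can overlap the original one as a set.

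First I would suppose case (ii) fails, so that there exist a nontrivial $\t \in \gal(K/\Q)$ and indices $i, j$ with $\t(z_i) = z_j$. Iterating the identity $\t \circ \phi_c = \phi_c \circ \t$ gives $\t(z_{i+k}) = z_{j+k}$ for every $k \in \Z/N\Z$, so $\t$ acts on the cycle as a cyclic shift by $s := j - i$; in particular $\t(z_0) = z_s$. The goal is then to show that $s$ must be a multiple of $N/g$ in $\Z/N\Z$.

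The key step is an order-counting argument. The element $\t$ has order dividing $d = |\gal(K/\Q)|$, so $\t^d = e$ and hence $z_0 = \t^d(z_0) = z_{ds}$, which forces $N \mid ds$. Thus the additive order of $s$ in $\Z/N\Z$ divides $d$; it obviously also divides $N$, and so it divides $g = \gcd(N, d)$. Therefore $s$ lies in the unique subgroup of $\Z/N\Z$ of order $g$, namely $\langle N/g \rangle$, giving $s = m \cdot (N/g)$ for some $m \in \Z/g\Z$ and verifying (i). The mutual exclusivity of (i) and (ii) is then automatic, since (i) exhibits $\t(z_0)$ as an element of the original cycle.

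I do not anticipate a serious obstacle; the argument is essentially a bookkeeping exercise once one observes that commutativity of $\t$ with $\phi_c$ promotes a single coincidence $\t(z_i) = z_j$ to a global cyclic-shift description of $\t$ on the cycle set. The only subtlety worth noting is the degenerate case in which $\t$ fixes $z_0$ (so $s = 0$ and $m = 0$); this possibility is accommodated by the statement of (i), which requires $\t$ to be nontrivial but allows $m = 0$, and it corresponds precisely to $\t$ fixing the entire cycle pointwise.
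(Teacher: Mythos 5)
Your proof is correct and follows essentially the same strategy as the paper's: use the commutativity of $\t$ with $\phi_c$ to upgrade a single coincidence $\t(z_i)=z_j$ to a cyclic shift on the whole orbit, apply Lagrange's theorem to get $\t^d=\mathrm{id}$ and hence $N\mid ds$, and deduce that the shift $s$ is a multiple of $N/g$. The only cosmetic difference is that you package the final divisibility step in terms of the additive order of $s$ in $\Z/N\Z$ and the unique subgroup of order $g$, whereas the paper argues directly via the remainder of $kd$ modulo $N$; both yield the same conclusion.
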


\begin{proof}
  First note that (i) and (ii) cannot be simultaneously true. In fact,
  if (i) is true, i.e., $z_{m \cdot \frac{N}{g}} = \t(z_0)$ for some
  $m$ and nontrivial $\t \in \gal(K/\Q)$, then
  $z_{m \cdot \frac{N}{g}} \in \set{z_0, \dots, z_{N-1}} \cap
  \set{\t(z_0), \dots, \t(z_{N-1})}$, so (ii) is false.

  If (ii) is true, then we are done. Otherwise, (ii) is false, so we
  have some nontrivial $\t \in \gal(K/\Q)$ such that $z_k = \t(z_j)$
  for some $j, k \in \Z/N\Z$. Note that $\t$ commutes with $\phi_c$
  (the polynomial $\phi_c$ is defined over $\Q$, and $\t$ is a field
  automorphism fixing the base field $\Q$), so we may assume $j = 0$;
  otherwise, without loss of generality $j \le N$, then
  $\t(z_0) = \t(z_N) = \t(\phi_c^{N-j}(z_j)) = \phi_c^{N-j}(\t(z_j)) =
  \phi_c^{N-j}(z_k) = z_{N+k-j}$,
  so we may set $j$ to 0 and $k$ to $N+k-j$. Assuming $j = 0$, we have
  $\t(z_0) = z_k$; this, together with the fact that $\t$ commutes
  with $\phi_c$, implies that $\t \equiv \phi_c^k$ on the entire cycle
  $(z_0, \dots, z_{N-1})$.

  Now recall that $K$ is Galois, so the order of the Galois group
  $\gal(K/\Q)$ is exactly $[K : \Q] = d$. Therefore, by Lagrange's
  theorem, $\t^d = id$, and hence
  \[
  z_0 = \t^d(z_0) = (\phi_c^k)^d(z_0) = z_{kd}.
  \]
  Let $r$ be the remainder of $kd$ modulo $N$. If $r$ is nonzero, then
  $(z_0, \dots, z_{r-1})$ forms a cycle of $\phi_c$ with length
  $r < N$, violating the assumption that $(z_0, \dots, z_{N-1})$ is an
  exact $N$-cycle. Therefore, $r = 0$, i.e., $N$ divides $kd$.
  Consequently, $k$ is a multiple of
  $\frac{N}{\gcd(N,d)} = \frac{N}{g}$, whence we have some
  $m \in \Z/g\Z$ such that $k = m \cdot \frac{N}{g}$. Recall that
  $\t(z_0) = z_k$, so (i) is true, and we are done.
\end{proof}

In fact, we have reasons to believe that the second case never occurs
in general. For instance, Panraksa \cite{MR2982105} proved a specific
version of our theorem for $N = 4$, $d = 2$, and showed that
$\{z_0, z_1, z_2, z_3\} \cap \{\ol{z_0}, \ol{z_1}, \ol{z_2},
\ol{z_3}\} \ne \es$,
hence rejecting the second case (for $d=2$, the only nontrivial
element of $\gal(K/\Q)$ is usual conjugation). Furthermore, all of
the examples currently known to us (the 5-cycles described by Flynn,
Poonen, and Schaefer \cite{MR1480542}, and the 6-cycles described by
Stoll \cite{MR2465796}) fall into the first case. Our computational
efforts also seem to favor this claim. Therefore, we propose the
following conjecture.

\begin{conjecture}[Restatement of Conjecture~\ref{cj:galois}]
  \label{cj:galois-re}
  Let $N \in \N^*$, $c \in \Q$, $\phi_c(z) = z^2 + c$, and $K$ be a
  Galois extension of $\Q$ with degree $d = [K : \Q]$. Let
  $(z_0, \dots, z_{N-1})$ be an exact $N$-cycle of $\phi_c$, where
  $z_j \in K$ and $\phi_c(z_j) = z_{j+1}$ for all $j \in \Z/N\Z$. Let
  $g = \gcd(N, d)$. Then there is some $m \in \Z/g\Z$ and some
  nontrivial $\t \in \gal(K/\Q)$ such that
  $z_{m \cdot \frac{N}{g}} = \t(z_0)$.
\end{conjecture}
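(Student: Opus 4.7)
The plan is to rule out case (ii) of Theorem~\ref{thm:galois-re}. Assume for contradiction that case (ii) holds. Since every $\t \in \gal(K/\Q)$ commutes with $\phi_c$, each nontrivial $\t$ sends the exact $N$-cycle $(z_0, \dots, z_{N-1})$ to another exact $N$-cycle $(\t(z_0), \dots, \t(z_{N-1}))$ of $\phi_c$ lying in $K$, and by assumption these are pairwise disjoint from the original. Hence the full $\gal(K/\Q)$-orbit of the cycle is a disjoint union of $d$ exact $N$-cycles of $\phi_c$, producing at least $Nd$ distinct roots of the dynatomic polynomial $\Phi_N(z,c) \in \Q[z,c]$ inside $K$. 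The goal is to show that this configuration is impossible.

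First I would specialize to $d = 2$, the setting most relevant to the rest of the paper. Here case (ii) asserts that $\phi_c$ has two disjoint exact $N$-cycles in a quadratic field $K$ whose union is Galois-stable even though neither cycle individually is. The locus of such configurations cuts out a closed subvariety of the dynatomic modular curve $X_1(N)$ (or its quotient $X_0(N)$ by the $N$-cycle shift), and I would try to show it has no $\Q$-rational points by: (a) starting from the Flynn-Poonen-Schaefer-style model of $X_1(N)$; (b) computing the Mordell-Weil rank and torsion of its Jacobian over $\Q$; (c) applying Chabauty-Coleman---or quadratic Chabauty if the rank is too large---to enumerate the low-degree points and verify none of them realizes case (ii).

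For general $d$, my plan would be to combine the Galois structure of case (ii) with factorization of $\Phi_N(z,c)$ over $K$. Case (ii) forces $\Phi_N(z,c)$ to admit a very specific type of factor over $K$: a product of $d$ Galois-conjugate monic degree-$N$ polynomials whose roots are cyclically permuted by $\phi_c$. Translating this cyclic-permutation condition into resultant-type equations on $c$ should cut out a proper closed subscheme of the affine $c$-line, which one could then try to show is empty---or contains only points at which the cycle degenerates to a lower period.

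The main obstacle is clear: the curves involved have rapidly growing genus, with Section~\ref{sec:n=5} already confronting a genus $11$ curve in the $N = 5$ case where Chabauty-Coleman is delicate. I do not expect a uniform proof in $N$ to come out of curve-by-curve descent; rather, it will likely require fundamentally new input, perhaps dynamical analogues of Mazur's and Merel's uniform boundedness theorems for torsion, or the full Uniform Boundedness Conjecture in arithmetic dynamics that motivated the paper.
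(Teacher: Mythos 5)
This statement is a \emph{conjecture}, not a theorem: the paper does not prove it and offers no proof to compare against. What the paper does instead is give supporting evidence --- Panraksa's result ruling out case~(ii) when $N=4$, $d=2$; the fact that the known $5$-cycles of Flynn--Poonen--Schaefer and the $6$-cycle of Stoll all fall into case~(i); and the authors' own computational searches. So there is no ``paper's own proof'' for your attempt to diverge from or agree with.

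Your write-up is appropriately honest that it is a research program rather than a proof, and your closing caveat (that a uniform-in-$N$ statement likely needs input of Mazur/Merel strength) is exactly the right thing to say. A few remarks on the content. Your observation that under case~(ii) the $d$ Galois conjugates of the cycle are pairwise disjoint is correct --- for distinct nontrivial $\t_1,\t_2$, if $\t_1$ and $\t_2$ sent the cycle to the same set then $\t_2^{-1}\t_1$ would be a nontrivial element fixing the cycle setwise, contradicting case~(ii) --- but the resulting ``at least $Nd$ roots of $\Phi_N(z,c)$ in $K$'' yields no contradiction on its own, since $\deg_z \Phi_N(z,c)$ grows like $2^N$ and easily accommodates $Nd$ roots; you do not claim otherwise, but it is worth flagging that this step is a framing device, not a constraint. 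For $d=2$, the curve-theoretic plan you outline (cut out the case-(ii) locus inside $X_1(N)$ or its quotient, then run Chabauty--Coleman or quadratic Chabauty) is essentially the strategy the paper itself pursues at $N=5$ in Section~\ref{sec:n=5}, where it already runs into the genus-$11$ obstruction you anticipate. Your general-$d$ factorization idea is plausible but vague as stated --- ``cut out a proper closed subscheme of the $c$-line'' would need the scheme to actually be proper and then be shown empty, and neither step is routine. None of this constitutes a gap relative to the paper, since the paper also leaves the conjecture open; the only correction I would make is to your framing of Theorem~\ref{thm:galois-re}'s role: the theorem establishes a clean dichotomy, and the conjecture is precisely the assertion that case~(ii) never happens, so ``ruling out case~(ii)'' is not a reduction but a restatement of the whole problem.
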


Implications of this conjecture will be deferred to
Section~\ref{sec:implications}, after we set up the necessary
geometric model.

\section{Geometric model}
\label{sec:model}
In this section we characterize the $N$-periodic points of rational
functions by geometrically irreducible algebraic curves using a well-
known model. After re-establishing the geometric model, we may apply
machinery and previous results from algebraic geometry to study these
periodic points, which are otherwise algebraic objects. (Note that
Theorem~\ref{thm:galois-re} is a purely algebraic result.)

Let $\phi_c(z) = z^2 + c$, then all $d$-periodic points of $\phi_c$,
where $d \mid N$, satisfy the polynomial equation
\[
\phi_c^N(z) - z = 0.
\]
By the M\"obius inversion formula, we have
\[
\phi_c^N(z) - z = \prod_{d|N} \Phi_d(z, c),
\]
where the dynatomic polynomial
\[
\Phi_d(z, c) = \prod_{m|d}(\phi_c^m(z) - z)^{\mu(d/m)}.
\]
With a little bit of effort we can show that $\Phi_d(z, c)$ belongs to
$\Z[z, c]$, and that all $N$-periodic of $\phi_c$ are roots of the
polynomial equation $\Phi_N(z, c) = 0$ (but not necessarily the
converse---$\Phi_d(z, c)$ might still contain roots with exact period
smaller than $N$).

The polynomial equation $\Phi_N(z, c)$ defines an algebraic curve in
the $(z, c)$-plane. Denote the normalization of this curve by
$C_1(N)$. Observe that the map $\phi_c$ permutes $N$-cycles, so the
map $\sigma: (z, c) \mapsto (\phi_c(z), c)$ is an automorphism of the
curve $C_1(N)$, and it generates a group $\tup{\sigma}$ of order
$N$. Take the quotient curve $C_1(N)/\tup{\sigma}$, and denote the
normalization of the quotient curve by $C_0(N)$. Note that for a given
number field $K$, the $K$-points on $C_0(N)$ do not necessarily
correspond to $K$-points on $C_1(N)$; rather, they correspond to
$\gal(\ol{K}/K)$-stable orbits on $C_1(N)$.

From the above discussions, the study of periodic points of exact
period $N$ in a number field $K$ (where we also require that
$c \in K$) reduces to the study of $K$-points on the curves $C_1(N)$
and $C_0(N)$. $K$-points on $C_1(N)$ correspond directly, with
finitely many exceptions due to removal of singularities, to pairs
$(z, \phi_c)$ of a point $z \in K$ and a map $\phi_c(z) = z^2 + c$
with $c \in K$ such that $z$ is a periodic point of period $N$ (not
necessarily exact) of $\phi_c$. $K$-points on $C_0(N)$ correspond,
with finitely many exceptions, to pairs $(\O, \phi_c)$ of a
$\gal(\ol{K}/K)$-stable orbit $\O$ of size $N$ and a map
$\phi_c(z) = z^2 + c$ with $c \in K$; obviously these include all
$(\O, \phi_c)$ pairs where elements of $\O$ are strictly contained in
$K$, and hence contain full information about periodic points in $K$.

\section{Implications of the Galois conjecture}
\label{sec:implications}

In this section, we discuss the implications of
Conjecture~\ref{cj:galois-re} in the special case of $d = 2$, i.e.,
when $K$ is quadratic, in which we are most interested.  In this case,
$K$ is automatically Galois, so the conjecture can be applied
unconditionally. Also, $|\gal(K/\Q)| = 2$, where the only nontrivial
element is usual conjugation, so Conjecture~\ref{cj:galois-re}
implies that there is some $m \in \Z/g\Z$ such that
$z_{m \cdot \frac{N}{g}} = \ol{z_0}$.

If $N$ is odd, then $\gcd(N, d) = 1$, so we have $z_0 = \ol{z_0}$,
i.e., $z_0$ is rational. Consequently the entire cycle lies within
$\Q$, so we can reduce the problem of finding quadratic $N$-periodic
points to that of finding rational $N$-cycles, which is a much more
approachable problem (the problem of finding rational points on curves
is studied extensively in the literature, whereas that of finding
points within quadratic fields is relatively obscure).

In particular, for $N = 5$, Flynn, Poonen, and Schaefer \cite{MR1480542}
showed that there are no rational 5-cycles, so we may
conclude that there are no quadratic 5-periodic points either.

\begin{corollary}
  \label{cor:n=5-0galois}
  If Conjecture~\ref{cj:galois-re} holds, then there are no rational
  $c$ such that $\phi_c(z) = z^2 + c$ has a 5-periodic point in $\Qq$.
\end{corollary}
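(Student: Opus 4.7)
The plan is to argue by contradiction and split into two cases based on whether the alleged 5-periodic point is genuinely quadratic or already lies in $\Q$. Suppose there exists some rational $c$ and some $z_0 \in \Qq$ with $\phi_c^5(z_0) = z_0$ and the orbit $(z_0, \dots, z_4)$ an exact 5-cycle. Since $\Qq$ consists of all elements of $\ol{\Q}$ that satisfy a degree-2 polynomial over $\Q$ (allowing reducible ones), the point $z_0$ lies in some field $K$ with $[K:\Q] \in \{1, 2\}$.

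First I would handle the rational case: if $z_0 \in \Q$, then since $\phi_c$ has rational coefficients the entire 5-cycle $(z_0, \phi_c(z_0), \dots, \phi_c^4(z_0))$ lies in $\Q$. But Flynn, Poonen, and Schaefer \cite{MR1480542} have shown that $\phi_c(z) = z^2 + c$ admits no rational 5-cycles for any $c \in \Q$, so this case is immediately ruled out.

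Next I would handle the genuinely quadratic case: suppose $[K:\Q] = 2$, so $K$ is automatically Galois with $\gal(K/\Q) = \{\mathrm{id}, \t\}$, where $\t$ is ordinary complex/quadratic conjugation. Here $N = 5$ and $d = 2$, so $g = \gcd(N, d) = \gcd(5, 2) = 1$, and $\Z/g\Z = \{0\}$. Applying Conjecture~\ref{cj:galois-re} (which we are assuming), the only admissible index is $m \cdot \frac{N}{g} = 0 \cdot 5 = 0$, forcing $z_0 = \t(z_0) = \ol{z_0}$. Therefore $z_0$ is fixed by the nontrivial element of $\gal(K/\Q)$ and hence lies in $\Q$, contradicting the assumption that $z_0 \in K \setminus \Q$. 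Combining the two cases yields the corollary.

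There is essentially no obstacle here; the corollary is a clean combination of two inputs. The only minor care needed is in unpacking the convention that $\Qq$ contains $\Q$ (so that the ``rational $z_0$'' subcase must be addressed separately and handled by the Flynn--Poonen--Schaefer non-existence result), and in noting that $g = 1$ collapses the index set $\Z/g\Z$ to the single element $0$, which is exactly what forces $z_0$ itself (rather than some later $z_i$) to equal its own conjugate. All the real work was done earlier in Theorem~\ref{thm:galois-re} and in \cite{MR1480542}.
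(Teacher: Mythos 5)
Your proof is correct and follows essentially the same route as the paper: the assumption $g=\gcd(5,2)=1$ collapses the index set so that Conjecture~\ref{cj:galois-re} forces $z_0=\ol{z_0}$, hence the whole cycle is rational, and the Flynn--Poonen--Schaefer result rules out rational 5-cycles. Your explicit separation of the case $z_0\in\Q$ (needed because $\Q\subseteq\Qq$ under the paper's definition) is a minor organizational refinement of the same argument, not a different approach.
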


In fact, in Section~\ref{sec:n=5} we will give an independent proof of
the finiteness of the total number of $c$'s that admit such 5-cycles,
and again conjecture that there are none based on empirical
observations.

On the other hand, If $N$ is even, then $\gcd(N, d) = 2$, and we have
either $z_0 = \ol{z_0}$, in which case the entire cycle lies within
$\Q$; or $z_0 = \ol{z_{\frac{N}{2}}}$, in which case
$z_0 + z_{\frac{N}{2}}$ is rational, and consequently the trace
$z_0 + z_1 + \cdots + z_{N-1} = \sum_{j = 0}^{\frac{N}{2} - 1} (z_j +
z_{j + \frac{N}{2}})$
is rational. Either case, the point on $C_0(N)$ that corresponds to
$c$ and the orbit $(z_0, \dots, z_{N-1})$ is a rational point, so the
problem of finding quadratic periodic points of exact period $N$ is
reduced to that of studying rational points on $C_0(N)$. This is again
considerably easier.

In particular, for $N = 6$, since the rational points on $C_0(6)$ are
already fully understood thanks to Stoll's work \cite{MR2465796}
(conditional on the weak Birch and Swinnerton-Dyer conjecture on the
Jacobian of $C_0(6)$), we can show by exhaustion that the only
quadratic 6-cycle is defined over $\Q(\sqrt{33})$, with
$c = -\frac{71}{48}$ and
\begin{equation}
  \label{eq:6-cycle}
  z_0 = -1 + \frac{\sqrt{33}}{12},\,
  z_1 = -\frac{1}{4} - \frac{\sqrt{33}}{6},\,
  z_2 = -\frac{1}{2} + \frac{\sqrt{33}}{12},\,
  z_3 = \ol{z_0},\,
  z_4 = \ol{z_1},\,
  z_5 = \ol{z_2}.
\end{equation}
Therefore, we have the following corollary.

\begin{corollary}
  \label{cor:6-cycle}
  Let $J$ be the Jacobian of $C_0(6)$. If
  \begin{enumerate}[(i)]
  \item The $L$-series $L(J, s)$ extends to an entire function and
    satisfies the standard functional equation;
  \item The weak Birch and Swinnerton-Dyer conjecture is valid for
    $J$; and
  \item Conjecture~\ref{cj:galois-re} holds,
  \end{enumerate}
  then the only rational $c$ such that $\phi_c(z) = z^2 + c$ has a
  6-periodic point in $\Qq$ is $-\frac{71}{48}$, and its corresponding
  periodic points are $z_0, \dots, z_5$ as defined in
  (\ref{eq:6-cycle}).
\end{corollary}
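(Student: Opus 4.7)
The plan is to apply Conjecture~\ref{cj:galois-re} to reduce the classification of quadratic exact $6$-cycles of $\phi_c$ to the enumeration of rational points on $C_0(6)$, and then to invoke Stoll's explicit determination of $C_0(6)(\Q)$ \cite{MR2465796} under hypotheses (i) and (ii). Suppose $c \in \Q$ is such that $\phi_c$ admits an exact $6$-cycle $(z_0, \dots, z_5)$ lying in some quadratic field $K$. With $N = 6$, $d = [K : \Q] = 2$, and $g = \gcd(N, d) = 2$, hypothesis (iii) forces either $z_0 = \ol{z_0}$ or $z_0 = \ol{z_3}$. The first case is ruled out because $\phi_c$ has no exact rational $6$-cycle for any $c \in \Q$, which follows from Stoll's analysis of $C_1(6)$ in \cite{MR2465796}. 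In the second case, applying $\phi_c^j$ to both sides and using that $\phi_c$ commutes with complex conjugation gives $z_j = \ol{z_{j+3}}$ for all $j \in \Z/6\Z$, so the orbit $\O = \set{z_0, \dots, z_5}$ is $\gal(\ol{\Q}/\Q)$-stable. By the discussion of $C_0(N)$ in Section~\ref{sec:model}, the pair $(c, \O)$ therefore determines a rational point on $C_0(6)$.

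Next I would invoke Stoll's theorem \cite{MR2465796}, which, under hypotheses (i) and (ii), furnishes an explicit finite list of all rational points on $C_0(6)$. For each point on this list I would recover the associated value of $c \in \Q$ and examine the corresponding orbit, setting aside the exceptional points of the model (cusps, singularities, and points at infinity) as well as those points corresponding to $\gal(\ol{\Q}/\Q)$-stable collections of size $6$ that are built out of fixed points, $2$-cycles, or $3$-cycles of $\phi_c$ rather than out of a single exact $6$-cycle.

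The hard part will be the final bookkeeping. One must verify, by direct computation on each point in Stoll's finite list, that the unique rational point producing a genuine exact $6$-cycle in $\Qq$ yields $c = -\frac{71}{48}$ together with the cycle recorded in~(\ref{eq:6-cycle}). Concretely, for each candidate value of $c$ one would factor the dynatomic polynomial $\Phi_6(z, c)$ over $\Q$, verify that its irreducible factors generate a quadratic extension of $\Q$ (rather than a cubic or sextic one), and check that the induced Galois action on the cycle is the shift by $3$ predicted by Conjecture~\ref{cj:galois-re}. Once this case-by-case check is complete, the corollary follows.
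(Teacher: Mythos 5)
Your proposal takes essentially the same route as the paper: use Conjecture~\ref{cj:galois-re} (with $N = 6$, $d = 2$, $g = 2$) to show the orbit is $\gal(\ol{\Q}/\Q)$-stable, deduce a rational point on $C_0(6)$, and invoke Stoll's conditional classification of $C_0(6)(\Q)$. The paper treats the two subcases $z_0 = \ol{z_0}$ and $z_0 = \ol{z_3}$ uniformly (both make the trace $\sum z_j$ rational, hence the corresponding $C_0(6)$-point rational), so your separate dismissal of rational $6$-cycles is unnecessary---and that nonexistence result in any case comes from Stoll's analysis of $C_0(6)$, not of $C_1(6)$ as you wrote.
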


In summary, if Conjecture~\ref{cj:galois-re} is confirmed, then it
reduces all cases of finding quadratic $N$-periodic points to finding
rational points on $C_0(N)$, which is significantly easier than
finding points in quadratic extensions. In particular, for $N$ small
where we have a good understanding of $C_0(N)$, this leads to very
precise results.

\section{Another approach to the period 5 case}
\label{sec:n=5}

In the previous section, we argued that if we assume
Conjecture~\ref{cj:galois-re} (which is a sweeping conjecture that
applies to all values of $N$), then there are no quadratic periodic
points of period 5 (Corollary~\ref{cor:n=5-0galois}). In this section,
we use an entirely different approach in pursuit of proving
Conjecture~\ref{cj:n=5-zero} without the assumption of
Conjecture~\ref{cj:galois-re} as done in Corollary~\ref{cor:n=5-0galois}.
We work on a curve $C_P$ characterizing 5-cycles in $Qq$ using
information from $C_0(5)$ and search for the number of quadratic
polynomials $\phi_c(z) = z^2 + c$ with quadratic 5-periodic points.
We provide evidence that we find all such points and ultimately
suggest an application of the method of Chabauty and Coleman for
further progress.

Flynn, Poonen, and Schaefer showed in \cite{MR1480542} that $C_0(5)$,
which has genus 2, is birationally equivalent to the hyperelliptic
curve
\begin{equation}
  \label{eq:c0(5)}
  y^2 = f(x) = x^6 + 8x^5 + 22x^4 + 22x^3 + 5x^2 + 6x + 1,
\end{equation}
where the original $c$ is given in terms of $x$ and $y$ by
\begin{equation}
  \label{eq:c-in-xy}
  c = \frac{g(x)}{2(P_0(x) - P_1(x) y)}
  = \frac{P_0(x) + P_1(x) y}{h(x)},
\end{equation}
where $g, h, P_0, P_1 \in \Z[x]$ are
\begin{subequations}
  \label{eq:poly-defs}
  \begin{align}
    g(x) & = 8x^6 + 74x^5 + 271x^4 + 452x^3 + 325x^2 + 110x + 64,\\
    h(x) & = 8x^2(x+3)^2,\\
    P_0(x) & = - x^6 - 10x^5 - 46x^4 - 104x^3 - 95x^2 - 24x - 9,\\
    P_1(x) & = x^3 + 6x^2 + 3x - 9.
  \end{align}
\end{subequations}

We prove the following well-known result with the above model by
reducing quadratic points to rational points on a new curve.

Eliminating $y$ from (\ref{eq:c0(5)}) and (\ref{eq:c-in-xy}), we get
\[
(c \cdot h(x) - P_0(x))^2 = P_1(x)^2 y^2 = P_1(x)^2 f(x),
\]
i.e., the variable $x$ satisfies the polynomial equation
\[
[(c \cdot h - P_0)^2 - P_1^2 f](x) = 0.
\]
We are only looking for $x \in \Qq$,\footnote{%
  Recall that we are seeking $z \in \Qq$ and $c \in \Q$. On
  $C_0(5) = C_1(5) / \tup{\sigma}$, whose points correspond to pairs
  $(\O, \phi_c)$, each orbit $\O$ is represented by its trace
  $\tau = z + \phi_c(z) + \cdots + \phi_c^4(z)$, which is in the
  same quadratic extension as $z$. The series of change of
  coordinates from $(\tau, c)$ to $(x, y)$ involves only arithmetic
  operations, so the resulting $x$ and $y$ are still in the same
  quadratic extension as $\tau$. In particular, we have $x \in
  \Qq$. See \cite{MR1480542} for details.}
so $x$ satisfies a quadratic equation
\[
x^2 + a x + b = 0.
\]
for some rational coefficients $a$ and $b$. Having obtained two
polynomial equations in $x$, we may divide
$(c \cdot h - P_0)^2 - P_1^2 f$ by $x^2 + ax + b$ using long
division, and the remainder must be zero. We may compute that the
remainder is
\[
\l_1(a, b, c)\, x + \l_0(a, b, c),
\]
where $\l_1$ and $\l_0$ are polynomials in $a$, $b$ and $c$ with
integer coefficients, given by\footnote{%
  This and all subsequent computations in this proof can be found in
  our source code repository \cite{src} as
  \texttt{computational/mma/abc.m}.}
\[
\begin{aligned}
  \l_1&(a, b, c) = 16 (a - 3) (2b - a(a-3)) c^2 -
  4(a^{5} - 10 a^{4} - 4 a^{3} b + 46 a^{3} + 30 a^{2} b + \\
  & 3 a b^{2} - 104 a^{2} - 92 a b - 10 b^{2} + 95 a + 104 b - 24) c -
  (8 a^{5} - 74 a^{4} - 32 a^{3} b + \\
  & 271 a^{3} + 222 a^{2} b + 24 a b^{2} - 452 a^{2} - 542 a b - 74
  b^{2} + 325 a + 452 b - 110),
\end{aligned}
\]
and
\[
\begin{aligned}
  \l_0&(a, b, c) = 16 b (b - (a-3)^2) c^2 -
  4(a^{4} b - 10 a^{3} b - 3 a^{2} b^{2} + 46 a^{2} b + \\
  & 20 a b^{2} + b^{3} - 104 a b - 46 b^{2} + 95 b - 9) c -
  (8 a^{4} b - 74 a^{3} b - 24 a^{2} b^{2} + \\
  & 271 a^{2} b + 148 a b^{2} + 8 b^{3} - 452 a b - 271 b^{2} + 325 b
  - 64).
\end{aligned}
\]
Note that $a$, $b$ and $c$ are rational numbers, so $\l_1(a, b, c)$
and $\l_0(a, b, c)$ are both rational-valued, and hence from
\[
\l_1(a, b, c)\, x + \l_0(a, b, c) = 0
\]
we conclude that either $x$ is rational, or both $\l_1$ and $\l_0$
are zero. Thanks to \cite{MR1480542}, we already fully understand
the case where $x$ is rational (in which case there are no
corresponding periodic points in $\Qq$---the corresponding points
are either at infinity or are quintic over $\Q$). Therefore, we only
consider $x \in \Qq \setminus \Q$, in which case
\[
\l_1(a, b, c) = \l_0(a, b, c) = 0.
\]
Observe that $c$ is a common root to $\l_1$ and $\l_0$, so the
resultant of $\l_1$ and $\l_0$ with respect to $c$ is
zero. Depending on the degrees of $\l_1$ and $\l_0$ in $c$, we have
three cases.

\begin{case}
  The leading coefficient of $\l_1(a, b, c)$ (considered as a single
  variable polynomial in $c$) vanishes. This happens when
  \[
  16(a - 3)(2b - a(a-3)) = 0,
  \]
  i.e., either $a = 3$, or $b = a(a-3)/2$, or both.

  If $a = 3$, substituting $a = 3$ into $\l_1 = \l_2 = 0$ we get two
  polynomial equations in $b$ and $c$. Taking the resultant of the
  two with respect to $c$, we get
  \[
  - 96b^7 - 1264b^5 - 4256b^5 + 32b^4 + 13248b^3 + 37632b^2 + 92160b
  = 0,
  \]
  which has only one rational root $b = 0$. However,
  $\l_1(3, 0, c) \equiv -64 \ne 0$, a contradiction.

  If $b = a(a-3)/2$, substituting this into $\l_1 = \l_2 = 0$ we get
  two polynomials equations in $a$ and $c$. We can again reduce them
  to a single variable polynomial equation by taking the resultant,
  and easily derive a contradiction through exhaustion.
\end{case}

\begin{case}
  The leading coefficient of $\l_0(a, b, c)$ (considered as a single
  variable polynomial in $c$) vanishes. This happens when
  \[
  16b(b - (a-3)^2) = 0,
  \]
  i.e., either $b = 0$ or $b = (a - 3)^2$. Similar to Case~1, it is
  again a finite calculation to show that no rational values $a$,
  $b$, and $c$ work in this case.
\end{case}

\begin{case}
  Both of the leading coefficients of $\l_1(a, b, c)$ and
  $\l_0(a, b, c)$ (considered as single variable polynomials in $c$)
  are non-vanishing, i.e., both $\l_1(a, b, c)$ and $\l_0(a, b, c)$
  are quadratic in $c$. In this case, we compute the resultant of
  $\l_1$ and $\l_0$ with respect to $c$ directly (using
  Mathematica), which turns out to be a polynomial in $\Z[a,
  b]$. Denote this polynomial by $P(a, b)$.\footnote{%
    The polynomial $P(a, b)$ is very complicated: its degree in $a$
    is 8, and its degree in $b$ is 9.}  Our problem reduces to
  finding rational points $(a, b)$ on the curve $C_P$ defined by
  $P(a, b)$.
\end{case}

\begin{remark}
	A computation in Sage shows that the normalization of the curve
	$C_P$ has genus 11. Therefore, the number of rational points on
	this curve is finite by Faltings's Theorem on the Mordell
	Conjecture. For each rational point $(a, b)$, the
  rational value $c$ satisfies the polynomial equations
  $\l_1(a, b, c) = \l_0(a, b, c) = 0$, so the total number of $c$ is
  also finite. Hence we have given another demonstration of
	Propostion~\ref{prop:n=5-finite}.
\end{remark}

Since the finiteness of the number of rational $c$ such that $\phi_c$
has quadratic 5-periodic points is already known, the natural next step
is to find the precise number of such $c \in \Q$. However, we had already
found by Corollary~\ref{cor:n=5-0galois} that there are no such $c$ if
Conjecture~\ref{cj:galois-re} holds. Furthermore, a computational
search for such $c$ was unfruitful and provides further support that
there are no rational $c$ such that $\phi_c$ has quadratic 5-periodic
points. Here, we restate the corollary as a conjecture without the
conjectural hypothesis.

\begin{conjecture}[Restatement of Conjecture~\ref{cj:n=5-zero}]
  \label{cj:n=5-zero-re}
  There are no rational values $c$ such that $\phi_c(z) = z^2 + c$ has
  a 5-periodic point in $\Qq$.
\end{conjecture}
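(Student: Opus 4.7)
The plan is to apply the method of Chabauty and Coleman to the genus~11 curve $C_P$ constructed in Section~\ref{sec:n=5}, and then verify by direct computation that none of the finitely many rational points produced corresponds to a genuine $5$-cycle of $\phi_c$ in $\Qq \setminus \Q$. Our starting point is already strong: by Case~1, Case~2, and the Remark, the conjecture reduces to the statement that no $(a,b) \in C_P(\Q)$ arising in Case~3 yields a rational $c$ whose $\phi_c$ admits an exact $5$-cycle in $\Qq \setminus \Q$.

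The first step is to exploit the symmetries of $C_P$ inherited from $C_0(5)$ in order to decompose the Jacobian $J(C_P)$, up to isogeny, into lower-dimensional abelian factors. If such a decomposition $J(C_P) \sim A_1 \times \cdots \times A_k$ is realized geometrically through a covering of $C_P$ by quotient curves $C_1, \dots, C_k$ of genera $g_i < 11$, I would bound the Mordell--Weil rank $r_i$ of each $J(C_i)(\Q)$ by $2$-descent in \texttt{Magma} or \texttt{Sage}. Whenever $r_i < g_i$, Chabauty--Coleman applies: for a prime $p$ of good reduction, one computes the $(g_i - r_i)$-dimensional space of Coleman $1$-forms vanishing on the closure of $J(C_i)(\Q)$ inside $J(C_i)(\Q_p)$, then counts their common zeros on $C_i(\Q_p)$ to obtain an effective upper bound on $|C_i(\Q)|$. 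Pulling back to $C_P$ and cross-referencing against the computational search recorded in \cite{src}, one obtains an explicit candidate list of pairs $(a,b)$, each of which must then be checked against the equations $\l_1(a,b,c) = \l_0(a,b,c) = 0$ and $x^2 + ax + b = 0$ to verify that no exact $5$-cycle in $\Qq$ actually arises.

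The principal obstacle is the Mordell--Weil rank computation: at genus $11$, even after decomposition some Jacobian factors are likely to resist a direct $2$-descent, so one may need to supplement Chabauty--Coleman with a Mordell--Weil sieve in the spirit of Stoll's treatment of $C_0(6)$ in \cite{MR2465796}. A secondary difficulty is that even when the Chabauty bound is effective, it is not always sharp, and closing the gap between the $p$-adic zero count and the genuine rational locus requires delicate arithmetic of the particular Jacobian at hand. Should the direct attack on $C_P$ prove intractable, a natural fallback is to apply Chabauty--Coleman to $C_0(5)$ itself (of genus only $2$), parametrizing its quadratic points via the symmetric square $\mathrm{Sym}^2(C_0(5))$ and then imposing the rationality condition on $c$ as a cut inside the Jacobian; this approach is closer in spirit to Corollary~\ref{cor:6-cycle}, at the cost of losing the purely algebraic flavor of the Case~3 reduction.
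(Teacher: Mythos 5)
This statement is a \emph{conjecture} in the paper; the authors do not prove it. They offer computational evidence (five affine rational points found on $C_P$, none yielding a quadratic $5$-cycle) and explicitly flag Chabauty--Coleman as a plausible route, but they also note a concrete obstruction that your proposal does not squarely confront: the Chabauty bound on $C_P$ is estimated to be at least $50$, while only $11$ rational points (with multiplicity, including the three nodes and points at infinity) are known. A raw Chabauty count therefore cannot close the gap, and the paper says as much, pointing to a ``clever refinement'' in the style of Flynn--Poonen--Schaefer as the missing ingredient.

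Your plan's central new idea relative to the paper---decomposing $J(C_P)$ via symmetries inherited from $C_0(5)$ and running Chabauty on lower-genus quotient curves---is exactly the kind of refinement that could help, but as written it is an aspiration, not an argument. You have not exhibited a single nontrivial automorphism of $C_P$: the hyperelliptic involution $y \mapsto -y$ of $C_0(5)$ has already been quotiented away in forming $C_P$ (you eliminated $y$ and worked with a resultant in $(a,b)$), and there is no evident group action on the $(a,b)$-plane compatible with $P(a,b)=0$. Until you produce an explicit covering $C_P \to C_i$ with $g_i < 11$ and compute the rank of $J(C_i)(\Q)$, the proposal does not advance past the paper's own stated difficulty. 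Similarly, your fallback of parametrizing quadratic points on $C_0(5)$ via $\mathrm{Sym}^2 C_0(5)$ is a reasonable alternative framing (essentially ``quadratic Chabauty on a genus-$2$ curve''), but for genus $2$ the symmetric square has a two-dimensional Jacobian and one still needs the rank to be strictly less than $2$ together with a way to impose rationality of $c$; none of this is carried out. In short, what you have is a sensible research program aligned with the paper's closing remarks, not a proof, and the crucial missing step---a verified decomposition or sieve that beats the $\geq 50$ Chabauty bound---is precisely what the paper identifies as open.
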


One promising approach to this conjecture is the study of the rational
points on the curve $C_P$ defined by $P(a, b)$, the resultant of
$\l_1$ and $\l_2$, as given above.  A full understanding of the rational
points of this curve will give complete information of the possible
values of $c$.

Computationally, we found 5 affine rational points $(3, 0)$, $(0, 0)$,
$(4, \frac{1}{3})$, $(1, \frac{8}{3})$ and $(6, 9)$ on $C_P$ with
small heights.\footnote{%
  Apart from the five known affine rational points, there are also
  three rational points at infinity. Five of these eight known
  projective rational points turn out to be regular, and the remaining
  three turn out to be nodes. With multiplicities counted, these
  amount to 11 known rational points in total on $C_P$.
}
It appears that these 5 affine rational points might be the only ones
on the curve and they do not correspond to $\phi_c$ with 5-periodic
points. Thus, proving that these are the only affine rational points
on $C_P$ will prove Conjecture~\ref{cj:n=5-zero-re}.

An application of Chabauty and Coleman's method \cite{MR808103} to
bound the number of rational points on $C_P$ may be fruitful, but the
bound that can be obtained from this method is estimated to be at
least 50. This bound would be too large for demonstrating the
nonexistence of rational points on $C_P$ outside of the 11 rational
points that we have already found. A clever refinement, such as the
technique used in \cite{MR1480542}, would be needed for any progress.

\section{Acknowledgements}
We are grateful to Niccol\`{o} Ronchetti for introducing us to this
field of study, providing incredibly helpful guidance, and being an
excellent project mentor. We thank Professor Brian Conrad for key
insights that led to the proof of Proposition~\ref{prop:n=5-finite}. We
also thank Professor Ben Hutz and Professor Patrick Ingram for their
remarks with regards to current results and progress in this field. We
are also appreciative of the Stanford Undergraduate Research Institute in
Mathematics (SURIM) for arranging our project and providing a great
environment for mathematical learning and collaboration. Finally, we
gratefully acknowledge that our research was financially supported by
research stipends from the Office of the Vice Provost for
Undergraduate Education (VPUE) of Stanford University. We deeply
appreciate all of the support that has made our work possible.

\nocite{*}
\bibliography{bibliography}{}
\bibliographystyle{plain}

\end{document}